\newcommand{\NN}{\mathbb{N}}
\newcommand{\RR}{\mathbb{R}}
\newcommand{\ints}{\int\limits}
\newcommand{\ep}{\varepsilon}
\newcommand{\al}{\alpha}
\newcommand{\si}{\sigma}
\newcommand{\x}{\hat{x}}
\newcommand{\ie}{i.e.}
\newcommand{\à}{\`a}
\newcommand{\Om}{\Omega}
\newcommand{\pa}{\partial}
\newtheorem{theorem}{Theorem}[section]
\newtheorem{lemma}[theorem]{Lemma}
\newtheorem{proposition}[theorem]{Proposition}
\newtheorem{example}[theorem]{Example}
\theoremstyle{remark}
\newtheorem{rmk}[theorem]{Remark}
\DeclareMathOperator{\tr}{\mathrm{tr}}
\DeclareMathOperator{\diver}{\mathrm{div}}
\DeclareMathOperator{\dist}{\mathrm{dist}}
\renewenvironment{proof}{
  \noindent{\it Proof.}\ }{\hspace*{\fill}
  \begin{math}\Box\end{math}\medskip}
\title{A viscosity equation for minimizers of a class of very degenerate elliptic functionals}
\author{Giulio Ciraolo\thanks{Dipartimento di Matematica e
Informatica, Universit\`a di Palermo, Via Archirafi 34, 90123, Italy, ({\tt g.ciraolo@math.unipa.it}).}
}
\begin{document}

\maketitle

\begin{abstract}
We consider the functional
\begin{equation*}
J(v) = \int_\Omega [f(|\nabla v|) -  v] dx,
\end{equation*}
where $\Omega$ is a bounded domain and $f:[0,+\infty)\to \RR$ is a convex function vanishing for $s\in [0,\sigma]$, with $\sigma>0$. We prove that a minimizer $u$ of $J$ satisfies an equation of the form
\begin{equation*}
\min(F(\nabla u, D^2 u), |\nabla u|-\sigma)=0
\end{equation*}
in the viscosity sense.
\end{abstract}

\section{Introduction} \label{section introd}
Let $\Omega$ be a bounded domain in $\RR^N,\: N\geq 2$, with boundary $\pa \Omega$ of class $C^{2,\al}$, with $0<\al<1$. We consider the variational problem
\begin{equation}\label{J(v)}
\inf \{J(v):\ v \in W_0^{1,\infty}(\Omega)\},\quad \textmd{where }\ \ J(v) = \ints_\Omega [f(|\nabla v|) -  v] dx;
\end{equation}
here, the function $f: [0,+\infty) \to \RR$ is convex, monotone, nondecreasing and we assume that there exists $\si>0$ such that
\begin{subequations} \label{f hp}
\begin{eqnarray}
&& f\in C^1([0,+\infty)) \cap C^3((\si,+\infty)); \label{f0} \\
&& f(0)=0 \textmd{ and } \lim\limits_{s\to +\infty} \dfrac{f(s)}{s} = +\infty; \label{f1} \\
&& f'(s)=0\textmd{ for every } 0 \leq s \leq \si ;  \label{f2} \\
&& f''(s)>0 \textmd{ for } s>\si . \label{f3}
\end{eqnarray}
\end{subequations}
Functionals of this kind occur in the study of complex-valued solutions of the {\it eikonal} equation (see \cite{MT1}--\cite{MT4}), as well as in the study of problems linked to traffic congestion (see \cite{BCS}) and in variational problems which are relaxations of non-convex ones (see \cite{CarMul}). We have in mind the following two main examples of a function $f$:
\begin{equation} \label{f ikonal}
f(s)=\begin{cases} 0, & 0\leq s \leq 1 \\
\frac{1}{2}[s\sqrt{s^2-1} - \log(s+\sqrt{s^2-1})], & s>1,
\end{cases}
\end{equation}
which arises from the study of complex-valued solutions of the eikonal equation, and
\begin{equation} \label{f congestion traffic}
f(s)=\begin{cases}
\frac{1}{q}(s-1)^q, & s >1,\\
0, & 0 \leq s \leq 1,
\end{cases}
\end{equation}
$q>1$, which is linked to traffic congestion problems.

Since $f$ vanishes in the interval $[0,\si]$, problem \eqref{J(v)} is strongly degenerate and, as far as we know, few studies have been done. Besides the papers cited before, we mention \cite{Br} and \cite{SV} where regularity issues were tackled.

In this paper, we shall prove that the minimizer $u$ of \eqref{J(v)} satisfies an equation of the form
\begin{equation}\label{eq min introd}
\min \Big(F(\nabla u,D^2 u), |\nabla u|-\si \Big) = 0
\end{equation}
in the viscosity sense (see Theorems \ref{viscosityEqForV f C2} and \ref{viscosityEqForV} for the meaning of $F$).

Our strategy is to approximate $J$ by a sequence of less degenerating functionals so that the minimizers of the corresponding variational problems converge uniformly to $u$; this is done in Section \ref{section 2}. Then, the machinery of viscosity equations applies and, in Section \ref{section 3}, we prove that $u$ satisfies \eqref{eq min introd}. To prove Theorems \ref{viscosityEqForV f C2} and \ref{viscosityEqForV}, which are our main results, we make use of techniques which have been used in the context of the $\infty$-Laplace operator (see for instance \cite{BK},\cite{Je},\cite{JLM}).

\section{Preliminary results} \label{section 2}
We start by recalling some well-known facts. Since $\Omega$ is bounded and $\pa \Omega$ is of class $C^{2,\al}$ then the following \emph{uniform exterior sphere condition} holds: there exists $\rho>0$ such that for every $x_0\in\pa \Omega$ there exists a ball $B_{\rho}(y)$ of
radius $\rho$ centered at $y=y(x_0) \in \RR^N \setminus \overline{\Omega}$ such that $\overline{B_{\rho}(y)} \cap \overline{\Omega} = \overline{B_{\rho}(y)} \cap \pa \Omega$ and $x_0 \in \partial{B_{\rho}(y)}$.

Notice that, since $f$ satisfies \eqref{f0}-\eqref{f3}, the functional $J$ is
differentiable and a critical point $u$ of $J$ satisfies the
problem
\begin{equation} \label{Euler eq 1}
\begin{cases}
- \diver \left( \dfrac{f'(|\nabla u|)}{|\nabla u|} \nabla u \right) = 1, & \textmd{in } \Omega,\\
u=0, &  \textmd{on } \pa \Omega,
\end{cases}
\end{equation}
in the weak sense, i.e.
\begin{equation}\label{Euler eq 2}
\ints_\Omega \dfrac{f'(|\nabla u|)}{|\nabla u|} \nabla u \cdot \nabla \phi dx = \ints_\Omega \phi dx, \quad \textmd{for every } \phi\in C_0^1(\Omega).
\end{equation}

It will be useful in the sequel to have at hand the solution of \eqref{Euler eq 1} when
$\Om$ is the ball of given radius $R$ (centered at the origin): it is given by
\begin{equation} \label{eq soluzione palla}
u_R(x) = \ints_{|x|}^R g'\Big( \frac{s}{N} \Big) ds ,
\end{equation}
where
\begin{equation*}
g(t)=\sup\{st-f(s):\ s\geq 0\}
\end{equation*}
is the Fenchel conjugate of $f$ (see for instance \cite{Cr} and \cite{FGK}).

It is clear that, when $\si=0,$ \eqref{J(v)} has a unique solution, since $f$ is strictly convex.
When $\si>0,$ the uniqueness of a minimizer for \eqref{J(v)} is proved in \cite{CMS}.

In this section we shall approximate the functional $J$ by a sequence of strictly convex functionals
\begin{equation}\label{Jn}
J_n(v) =  \ints_\Omega [f_n(|\nabla v|) -  v] dx,
\end{equation}
$n\in\NN$, which are less degenerating than $J$ (see Proposition \ref{prop:approx minimizers} for the assumptions on the functions $f_n$) and prove some uniform bounds for the minimizers $u_n$ of
\begin{equation}\label{Jn(v) pb min}
\inf \{J_n(v):\ v \in W_0^{1,\infty}(\Omega)\}.
\end{equation}

Notice that, if $f_n \in C^1([0,+\infty)) \cap C^3((0,+\infty))$ satisfies \eqref{f1} and it is such that $f_n'(0)=0$ and $f_n''(s)>0$ for $s>0$, then the minimizer $u_n$ of \eqref{Jn} is unique and satisfies
\begin{equation}\label{Euler eq 2 fn}
\ints_\Omega \dfrac{f_n'(|\nabla u_n|)}{|\nabla u_n|} \nabla u_n \cdot \nabla \phi dx = \ints_\Omega \phi dx, \quad \textmd{for every } \phi\in C_0^1(\Omega).
\end{equation}

We shall say that $w\in W^{1,\infty}(\Omega)$ is a subsolution of \eqref{Euler eq 2 fn} if
\begin{equation*}
\ints_\Omega \dfrac{f_n'(|\nabla w|)}{|\nabla w|} \nabla w \cdot \nabla \phi dx \leq \ints_\Omega \phi dx, \quad \textmd{for every } \phi\in C_0^1(\Omega) \ \textmd{ with } \phi\geq 0,
\end{equation*}
and that $w\in W^{1,\infty}(\Omega)$ is a supersolution of \eqref{Euler eq 2 fn} if
\begin{equation*}
\ints_\Omega \dfrac{f_n'(|\nabla w|)}{|\nabla w|} \nabla w \cdot \nabla \phi dx \geq \ints_\Omega \phi dx, \quad \textmd{for every } \phi\in C_0^1(\Omega) \ \textmd{ with } \phi\geq 0.
\end{equation*}

Let $u_n$ and $v_n$ be a subsolution and a supersolutions of \eqref{Euler eq 2 fn}, respectively. Then, the following \emph{weak comparison principle} holds: if $u_n \leq v_n$ on $\pa \Omega$ then $u_n \leq v_n$ in $\overline{\Omega}$  (see Lemma 3.7 in \cite{FGK}).

It will be useful to define the following $P$-function (see \cite{FGK}):
\begin{equation} \label{P}
P_n (x) = \Phi_n(|\nabla u_n(x)|) + \frac{2}{N} u_n(x), \quad x\in \overline{\Omega},
\end{equation}
where
\begin{equation} \label{Phi}
\Phi_n(t) = 2 \ints_0^t s f_n''(s) ds.
\end{equation}
To avoid heavy notations, in Lemmas \ref{lemma:gradients equibounded} and \ref{lemma: bounds II} we drop the dependence on $n$.

\begin{lemma} \label{lemma:gradients equibounded}
Let $f \in C^1([0,+\infty)) \cap C^3((0,+\infty))$ be such that $f'(0)=0$ and $f''(s)>0$ for $s>0$ and let $u$ be the solution of \eqref{J(v)}. Then, $|\nabla u|$ attains its maximum on the boundary of $\Omega$ and the following estimate holds:
\begin{equation} \label{bound Du}
|\nabla u(x)| \leq M , \quad x\in\overline{\Omega},
\end{equation}
with
\begin{equation}\label{M}
M=g'\left(\frac{\rho}{N-1} \Big(e^{\frac{(N-1)R^*}{\rho}} -1 \Big) \right),
\end{equation}
where $g$ is the Fenchel conjugate of $f$, $R^*=\sup\{|x-y|:\ x,y \in \pa \Omega\}$ and $\rho$ is the radius of the uniform exterior sphere.

Furthermore,
\begin{equation}\label{bounds u}
0 \leq u(x) \leq \min \left( \ints_{0}^{R^*} g'\Big( \frac{s}{N} \Big) ds,\ \frac{N}{2} \Phi(M) \right) \quad x \in \overline{\Omega}.
\end{equation}
\end{lemma}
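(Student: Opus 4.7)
The plan is to combine a $P$-function maximum principle (which moves $\sup_{\overline{\Omega}}|\nabla u|$ to the boundary) with a radial barrier built from the exterior sphere condition (which bounds the normal derivative of $u$ at each boundary point). Concretely, I would look at the auxiliary function $P(x) = \Phi(|\nabla u(x)|) + \tfrac{2}{N} u(x)$ from \eqref{P}--\eqref{Phi}. The $P$-function computation in \cite{FGK} shows that $P$ satisfies an elliptic maximum principle on $\overline{\Omega}$, so $\max_{\overline{\Omega}} P = \max_{\partial \Omega} P$. Weak comparison with the null function gives $u \geq 0$ in $\Omega$, and since $u=0$ on $\partial \Omega$,
\begin{equation*}
\Phi(|\nabla u(x)|)\le P(x)\le \max_{\partial \Omega}P = \max_{\partial \Omega}\Phi(|\nabla u|).
\end{equation*}
Monotonicity of $\Phi$ (which follows from $\Phi'(t) = 2t f''(t) \geq 0$) then forces $|\nabla u|$ to attain its maximum on $\partial \Omega$.

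Next, fix $x_0 \in \partial \Omega$ and let $B_\rho(y)$ be the exterior sphere at $x_0$, so $|x-y|\ge \rho$ for every $x\in \overline\Omega$. I would look for a supersolution of \eqref{Euler eq 2 fn} of the form $v(x)=\Psi(|x-y|)$ on $\{\rho\le |x-y|\le \rho+R^*\}\supset \overline{\Omega}$, with $\Psi(\rho)=0$ and $\Psi$ increasing. Setting $H(r) := f'(\Psi'(r))$, a short computation rewrites the supersolution condition as $H'(r) + \tfrac{N-1}{r}H(r) \le -1$. The trick is to replace the variable coefficient $(N-1)/r$ by the larger constant $(N-1)/\rho$ and solve the linear ODE
\begin{equation*}
H'(r) + \frac{N-1}{\rho} H(r) = -1, \qquad H(\rho+R^*) = 0,
\end{equation*}
whose explicit solution $H(r) = \tfrac{\rho}{N-1}\bigl(e^{(N-1)(\rho+R^*-r)/\rho}-1\bigr)$ is nonnegative on the interval. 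Using $r\ge\rho$ and $H\ge 0$ one verifies that $v$, defined by $\Psi'(r) = g'(H(r))$, is indeed a supersolution; since $v\ge 0=u$ on $\partial\Omega$ with $v(x_0)=u(x_0)=0$, the weak comparison principle recalled before the lemma gives $v\ge u$ in $\overline{\Omega}$. Hence $|\nabla u(x_0)|\le |\nabla v(x_0)| = \Psi'(\rho) = g'(H(\rho)) = M$, and combining with the previous paragraph yields \eqref{bound Du}.

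Finally, for the $L^\infty$ bound on $u$: nonnegativity was already used above. Comparing $u$ with the explicit solution $u_{R^*}$ from \eqref{eq soluzione palla} on any ball of radius $R^*$ containing $\Omega$ gives $u \le u_{R^*} \le \int_0^{R^*} g'(s/N)\,ds$ by weak comparison. Alternatively, the $P$-function bound reads $\tfrac{2}{N}u(x) \le P(x) \le \max_{\partial\Omega}\Phi(|\nabla u|)\le \Phi(M)$, hence $u(x) \le \tfrac{N}{2}\Phi(M)$, which gives the second half of \eqref{bounds u}.

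The main obstacle is the $P$-function maximum principle in the first step: deriving the elliptic inequality satisfied by $P$ requires careful differentiation of \eqref{Euler eq 2 fn} and enough regularity of $u$ on $\{|\nabla u|>0\}$ to control the second derivatives that arise; this is the delicate but standard Payne-type computation that I would simply quote from \cite{FGK}. Once that is in hand, the rest is essentially explicit, and the exponential factor in $M$ is exactly what appears when one trades $(N-1)/r$ for $(N-1)/\rho$ in order to reduce to a constant-coefficient linear ODE.
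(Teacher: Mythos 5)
Your proposal is correct and follows essentially the same route as the paper: the $P$-function maximum principle quoted from \cite{FGK} to push $|\nabla u|$ to the boundary and to get the bound $u\le \tfrac N2\Phi(M)$, comparison with the radial solution $u_{R^*}$ for the other bound on $u$, and the exterior-sphere barrier obtained by trading $(N-1)/r$ for $(N-1)/\rho$, whose solution $H(r)=\tfrac{\rho}{N-1}\bigl(e^{(N-1)(\rho+R^*-r)/\rho}-1\bigr)$ is exactly the paper's $\psi$ written in the radial variable $r=|x-y|$ instead of the distance $\delta=|x-y|-\rho$.
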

\begin{proof}
Since $u$ is a minimizer of $J$, it is easy to show that $u\geq 0$. Being $R^*$ the diameter of $\Omega$, there exist a ball of radius $R^*$ that contains $\Omega$ (we can assume that such ball is centered at the origin). Since $u_{R^*}(x) \geq 0$ for $ x \in \pa \Omega$, the weak comparison principle implies that
\begin{equation}
u(x) \leq u_{R^*}(x) \ \textmd{ for every } x \in \overline{\Omega}.
\end{equation}
From $u_{R^*}(x) \leq u_{R^*}(0),\ x \in B_{R^*}$ and from \eqref{eq soluzione palla}, we have
\begin{equation} \label{u leq u R*}
u(x) \leq \ints_0^{R^*} g'\left(\frac{s}{N}\right) ds,
\end{equation}
for every $x\in\overline{\Omega}$.

Now, we consider the $P$-function given by \eqref{P}. As proved in Lemma 3.2 in \cite{FGK}, $P$ attains its maximum on the boundary of $\Omega$ and thus
\begin{equation*}
P(x) \leq \max_{\pa \Omega} P = \max_{\pa \Omega} \Phi(|\nabla u|) , \quad x\in\overline{\Omega}.
\end{equation*}
Since $\Phi$ is strictly increasing, then we get
\begin{equation}\label{princMaxPerGradU}
\max_{\overline{\Omega}} |\nabla u(x)| = \max_{\pa \Omega} |\nabla u(x)|,
\end{equation}
i.e. $|\nabla u|$ attains its maximum on the boundary of $\Omega$.

Following \cite{GT}, we construct a barrier function for $u$ which will give us an upper bound for $|\nabla u|$ on the boundary of $\Omega$. Let $x_0\in\pa \Omega$ be fixed and let $B_\rho(y(x_0))$ be the ball in the exterior sphere condition. Set
$$\delta(x) = \dist(x,\pa B_\rho(y(x_0))),$$
and let $w(x)=\psi (\delta(x))$ be a function depending only on the distance from $\pa B_\rho(y(x_0))$; we have
\begin{equation} \label{eq:divergenzaDiW}
\diver\left\{ f'(|\nabla w|) \frac{\nabla w}{|\nabla w|} \right\} = \psi''(\delta(x)) f''(\psi'(\delta(x))) + f'(\psi'(\delta(x))) \Delta \delta(x).
\end{equation}
Since
\begin{equation*}
|\Delta \delta(x)| = \frac{N-1}{|x-y|} \leq \frac{N-1}{\rho},
\end{equation*}
from \eqref{eq:divergenzaDiW} we obtain
\begin{equation} \label{eq: quasil dep distance}
\diver\left\{ f'(|\nabla w|) \frac{\nabla w}{|\nabla w|} \right\} +1
\leq  \psi''(\delta(x)) f''(\psi'(\delta(x))) +\frac{N-1}{\rho}
f'(\psi'(\delta(x)))+1.
\end{equation}
By choosing
\begin{equation*}
\psi(t) = \ints_0^t g'\left(\frac{\rho}{N-1} \Big(e^{\frac{N-1}{\rho}(R^*-s)} -1 \Big) \right)  ds,
\end{equation*}
the right hand side of \eqref{eq: quasil dep distance} vanishes and
thus $w$ is a supersolution of \eqref{Euler eq 2}. Notice that $\psi'(t)>0$ for $t>0$ and then $\psi(t)>0$ for $t>0$. Since $x\in\Omega$ implies that $\dist(x,\partial B_\rho(x_0))>0$, we have that $w(x)\geq 0$ for $x\in\overline{\Omega}$. The weak comparison principle yields $u(x) \leq w(x)$ in
$\overline{\Omega}$. Since $x_0 \in \pa \Omega$ is arbitrary, we obtain
\begin{equation*}
|\nabla u(x)| \leq  g'\left(\frac{\rho}{N-1} \Big(e^{\frac{(N-1)R^*}{\rho}} -1 \Big) \right),
\end{equation*}
for any $x\in\partial\Omega$. According to \eqref{princMaxPerGradU} the same
estimate holds in the whole of $\Omega$ and \eqref{bound Du} holds.

Notice that from \eqref{P}
\begin{equation*}
u(x) \leq \frac{N}{2} P(x),\quad x\in\overline{\Omega};
\end{equation*}
since $P$ attains its maximum on the boundary of $\Omega$ and from \eqref{bound Du} we have that
\begin{equation*}
u(x) \leq \frac{N}{2} \Phi(M)
\end{equation*}
which, together with \eqref{u leq u R*}, implies \eqref{bounds u}.
\end{proof}

We denote by $H_{\pa \Omega} (x)$ the mean curvature of $\pa \Omega$ at the point $x\in\pa \Omega$ and set
\begin{equation*}
H_{\pa \Omega}^* =\min_{x\in \pa\Omega} H_{\pa \Omega} (x).
\end{equation*}

In the following Lemma, we give a further bound for $u$ and $|\nabla u|$ in the case that the mean curvature of $\pa \Omega$ attains a positive minimum.

\begin{lemma} \label{lemma: bounds II}
Let $f$ be as in Lemma \ref{lemma:gradients equibounded} and assume that $H_{\pa \Omega}^* > 0$. Then,
\begin{equation}\label{bound u H>0}
u(x) \leq \frac{N}{2} \Phi \bigg( g'\Big( \frac{1}{N H_{\pa \Omega}^*}  \Big) \bigg)
\end{equation}
and
\begin{equation}\label{bound Du H>0}
|\nabla u(x)| \leq g'\bigg( \frac{1}{N H_{\pa \Omega}^*} \bigg)
\end{equation}
for every $x\in \overline{\Omega}$, where $\Phi$ is given by \eqref{Phi} and $g$ is the Fenchel conjugate of $f$.
\end{lemma}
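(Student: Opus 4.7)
The plan is to refine the $P$-function argument from the proof of Lemma \ref{lemma:gradients equibounded}, exploiting $H_{\pa \Omega}^* > 0$ through a boundary-point computation at the maximum of $P$. Recall that $P = \Phi(|\nabla u|) + (2/N) u$ attains its maximum on $\pa \Omega$; since on $\pa \Omega$ we have $u = 0$ and $\Phi$ is strictly increasing, a maximum point $x_0 \in \pa \Omega$ of $P$ is one where $M := |\nabla u(x_0)| = \max_{\overline{\Omega}} |\nabla u|$. By standard elliptic regularity, $u \in C^{2,\al}$ in a neighborhood of $x_0$ (we may assume $M > 0$, since otherwise $u \equiv 0$ contradicts the PDE), so the elementary fact that a $C^1$ function attaining its maximum at a boundary point has nonnegative outer normal derivative yields $\pa_\nu P(x_0) \geq 0$, where $\nu$ is the outer unit normal.

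Writing out this inequality using $\pa_\nu u = -|\nabla u|$ on $\pa \Omega$ and $\Phi'(t) = 2 t f''(t)$, one obtains
\[
\pa_\nu |\nabla u|(x_0) \geq \frac{1}{N f''(M)}.
\]
Differentiating the boundary identity $\nabla u = -|\nabla u|\, \nu$ in the $\nu$ direction and contracting with $\nu$ gives $u_{\nu\nu}(x_0) = -\pa_\nu |\nabla u|(x_0)$, so $-u_{\nu\nu}(x_0) \geq 1/(N f''(M))$.

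Next, I rewrite \eqref{Euler eq 1} in nondivergence form and evaluate at $x_0$. This requires the standard boundary identity $\Delta u|_{\pa \Omega} = u_{\nu\nu} - (N-1) H_{\pa \Omega} |\nabla u|$, which follows from the vanishing of the tangential derivatives of $u$ on $\pa \Omega$ together with the Weingarten relation $\diver \nu = (N-1) H_{\pa \Omega}$. Substituting yields
\[
-f''(M)\, u_{\nu\nu}(x_0) = 1 - (N-1) H_{\pa \Omega}(x_0)\, f'(M),
\]
and combining with the earlier inequality and $H_{\pa \Omega}(x_0) \geq H_{\pa \Omega}^*$ gives $f'(M) \leq 1/(N H_{\pa \Omega}^*)$. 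Since $g' = (f')^{-1}$ by Fenchel duality, inverting produces \eqref{bound Du H>0}. The bound \eqref{bound u H>0} then follows from $\tfrac{2}{N} u(x) \leq P(x) \leq P(x_0) = \Phi(M)$ and the monotonicity of $\Phi$.

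The main delicate step is the boundary identity $\Delta u|_{\pa \Omega} = u_{\nu\nu} - (N-1) H_{\pa \Omega} |\nabla u|$, whose derivation requires a careful tangential-normal decomposition of the Hessian of a function vanishing on $\pa \Omega$; once this is in hand, the rest of the argument is a direct combination of the $P$-function machinery with the PDE evaluated at $x_0$.
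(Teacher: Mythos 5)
Your proof is correct and takes essentially the same route as the paper: the paper also reduces to the boundary inequality $N f'(|\nabla u|)H_{\pa\Omega}\leq 1$ on $\pa\Omega$ (citing it as Lemma 3.3 of \cite{FGK}) and then transfers the bound into $\Omega$ via the maximum of the $P$-function. The only difference is that you re-derive that boundary inequality in-line — via $\pa_\nu P\geq 0$ at the boundary maximum of $P$, the identity $u_{\nu\nu}=-\pa_\nu|\nabla u|$, and the non-divergence form of the PDE on $\pa\Omega$ — rather than quoting it; this is, in fact, the standard proof of the cited lemma, so there is no new idea, just a more self-contained write-up. (A very minor point: to justify $u\in C^{2,\al}$ up to $\pa\Omega$ near $x_0$ one needs $|\nabla u|$ bounded away from $0$ in a full boundary neighborhood, not merely $|\nabla u(x_0)|>0$; the paper covers this by invoking Lemma 2.7 of \cite{CMS}, and you should do the same rather than argue only that $M>0$.)
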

\begin{proof}
Since $|\nabla u|>0$ on $\pa \Omega$ (see Lemma 2.7 in \cite{CMS}), equation \eqref{Euler eq 2} is nondegenerate in a neighborhood of $\pa \Omega$; from standard elliptic regularity theory (see \cite{To} and \cite{GT}), we know that $u\in C^{2,\al}(\overline{\Omega} \setminus \{ x:\ \nabla u \neq 0 \})$ for some $\al\in(0,1)$, and then \eqref{Euler eq 2} can be written pointwise on $\pa \Omega$ as
\begin{equation*}
f''(|u_\nu(x)|) u_{\nu \nu}(x) - (N-1) f'(u_\nu(x)) H_{\pa \Omega}(x) = -1;
\end{equation*}
here, $\nu$ denotes the exterior unit normal to $\pa \Omega$, $u_\nu=\nabla u \cdot \nu$ and $u_{\nu \nu}=(D^2 u) \nu \cdot \nu $. From Lemma 3.3 in \cite{FGK}, we know that
\begin{equation*}
N f'(|\nabla u(x)|) H_{\pa \Omega}(x) \leq 1,
\end{equation*}
for every $x\in\pa\Omega$ and, since $g'$ is nondecreasing, then
\begin{equation*}
|\nabla u(x)| \leq g'\Big(\frac{1}{NH_{\pa \Omega}^*}\Big),
\end{equation*}
for every $x\in\pa\Omega$. Since $\Phi$ is nondecreasing and $P$ (given by \eqref{P}) attains its maximum on $\pa\Omega$, from $u=0$ on $\pa \Omega$ we obtain
\begin{equation} \label{bound P H>0}
P(x) \leq \Phi\bigg( g'\Big(\frac{1}{NH_{\pa \Omega}^*}\Big) \bigg)
\end{equation}
for every $x \in \Omega$. From \eqref{P} and \eqref{bound P H>0} we conclude.
\end{proof}

\noindent Notice that, when $\Omega$ is a ball, \eqref{bound Du H>0} is optimal.

\begin{proposition} \label{prop:approx minimizers}
Let $(f_n)_{n \in \NN}$ be such that:
\begin{enumerate}[(i)]
\item $f_n \in C^1([0,+\infty)) \cap C^3((0,+\infty))$;
\item $f_n$ converges uniformly to $f$ on the compact sets contained in $[0;+\infty)$;
\item $f_n'(0)=0$, the functions $f_n'$ decrease to $f'$ in $[0,+\infty)$ and  $f_n'$ converges uniformly to $f'$ on the compact sets contained in $[0,+\infty)$
\item $f_n''(t)>0$ for $t>0$.
\end{enumerate}
Let $u$ (resp. $u_n$) be the solution of \eqref{J(v)} for $J$ (resp. of \eqref{Jn(v) pb min} for $J_n$).
Then
\begin{enumerate}[(a)]
\item $u_n$ is a minimizing sequence for $J$ and $J_n(u_n) \to J (u)$;
\item $u_n$ and $\nabla u_n$ are uniformly bounded and (up to a subsequence) $(u_n)_{n\in\NN}$ tends to $u$ in the sup norm topology and $u$ satisfies estimates \eqref{bound Du} and \eqref{bounds u} almost everywhere in $\Omega$.
\end{enumerate}
\end{proposition}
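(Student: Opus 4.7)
The plan is to use Lemma~\ref{lemma:gradients equibounded} to obtain uniform bounds on $u_n$, extract a uniformly convergent subsequence by Ascoli-Arzel\`a, identify its limit with $u$ via lower semicontinuity together with the uniqueness of the minimizer, and then pass the desired bounds to the limit.

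Each $f_n$ satisfies the hypotheses of Lemma~\ref{lemma:gradients equibounded}, so
\begin{equation*}
|\na u_n|\le M_n,\qquad 0\le u_n\le \min\Bigl(\ints_0^{R^*}g_n'(s/N)\,ds,\ \tfrac{N}{2}\Phi_n(M_n)\Bigr),
\end{equation*}
with $g_n$ the Fenchel conjugate of $f_n$, $\Phi_n$ built from $f_n$ through \eqref{Phi}, and $M_n=g_n'\bigl(\tfrac{\rho}{N-1}(e^{(N-1)R^*/\rho}-1)\bigr)$. Hypothesis~(iii), $f_n'\ge f'$, transfers via Fenchel duality $g_n'=(f_n')^{-1}$ into $g_n'\le g'$; hence $M_n\le M$ and both upper bounds on $u_n$ are dominated uniformly by their $f$-counterparts. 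Since $\{u_n\}$ is uniformly bounded in $W_0^{1,\infty}(\Om)$, Ascoli-Arzel\`a produces a subsequence $u_{n_k}\to v$ uniformly on $\ovr{\Om}$, with $v\in W_0^{1,\infty}(\Om)$, $\na u_{n_k}\rightharpoonup^{*}\na v$ in $L^\infty$, and $|\na v|\le M$ a.e.

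To identify $v$ with $u$, I squeeze $J_{n_k}(u_{n_k})$. Since $u\in W_0^{1,\infty}(\Om)$, uniform convergence of $f_n$ on $[0,\|\na u\|_\infty]$ gives $J_{n_k}(u)\to J(u)$, and the minimality $J_{n_k}(u_{n_k})\le J_{n_k}(u)$ yields $\limsup_k J_{n_k}(u_{n_k})\le J(u)$. Conversely, $|\na u_{n_k}|\le M$ permits replacing $f_{n_k}$ by $f$ with error $\le|\Om|\,\|f_{n_k}-f\|_{L^\infty([0,M])}\to 0$, and convexity gives weak lower semicontinuity of $w\mapsto\ints_\Om f(|\na w|)\,dx$, so
\begin{equation*}
\liminf_k J_{n_k}(u_{n_k})\ge \ints_\Om f(|\na v|)\,dx-\ints_\Om v\,dx=J(v).
\end{equation*}
Hence $J(v)\le J(u)$, so $v$ is a minimizer of $J$; by the uniqueness result of \cite{CMS}, $v=u$. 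Since every subsequential limit coincides with $u$, the whole sequence $u_n$ converges uniformly to $u$.

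The above squeeze forces $J_n(u_n)\to J(u)$, and $|J(u_n)-J_n(u_n)|\le|\Om|\,\|f_n-f\|_{L^\infty([0,M])}\to 0$ yields $J(u_n)\to J(u)$, proving~(a). For~(b), the uniform bound $|\na u_n|\le M$ passes to $|\na u|\le M$ a.e.\ by weak-$*$ lower semicontinuity, and the two upper bounds on $u_n$ pass to the stated bounds on $u$ by uniform convergence together with $g_n'\le g'$ and $\Phi_n(M)=2Mf_n'(M)-2f_n(M)\to\Phi(M)$ (using the uniform convergence of $f_n$ and $f_n'$ on $[0,M]$). The main obstacle is the identification $v=u$: it requires the uniform convergence of $f_n$ on the common gradient range, in order to replace the moving integrand $f_n$ by $f$ before invoking convexity-based lower semicontinuity, and it relies crucially on the nontrivial uniqueness of the minimizer proved in~\cite{CMS}.
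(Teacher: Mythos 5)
Your proof is correct and takes essentially the same route as the paper's: compare $f_n'\geq f'$ with $g_n'\leq g'$ via Fenchel duality to get the uniform gradient bound $M_n\leq M$ from Lemma~\ref{lemma:gradients equibounded}, then apply Ascoli--Arzel\`a. Where the paper simply declares part~(a) ``standard'' and leaves the identification of the limit implicit, you correctly supply the missing $\Gamma$-convergence-style squeeze and the appeal to the uniqueness of minimizers from \cite{CMS}; this is a welcome elaboration rather than a different argument.
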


\begin{proof}
Since $J_n \to J$ uniformly (a) is standard. Since the sequence $(f_n')_{n\in\NN}$ is decreasing in $n$, then $g_n'$ is increasing in $n$ and converges pointwise to $g'$ (here, we denote by $g$ and $g_n$ the Fenchel conjugates of $f$ and $f_n$, respectively). Thus, $g_n(t) \leq g(t)$ and $g_n'(t) \leq g'(t)$ for every $t \in [0,+\infty)$ and (b) follows by Lemma \ref{lemma:gradients equibounded} and an application of Ascoli-Arzel\à's theorem.
\end{proof}

\section{Viscosity Euler-Lagrange equation}  \label{section 3}
In this section we prove that the solution $u$ of \eqref{J(v)} satisfies an equation of the form \eqref{eq min introd} in the viscosity sense. Firstly, we do it for $f\in C^2((0,+\infty)) \cup C^3((\sigma,+\infty))$ and then we deal with the case that $f$ is not twice differentiable at $s=\sigma$.

Consider a sequence of approximating functions $\{f_n\}_{n\in\NN}$ satisfying $(i)--(iv)$ in Proposition \ref{prop:approx minimizers}. The minimizer $u_n$ for \eqref{Jn(v) pb min} satisfies
\begin{equation*}
-\diver \frac{f_n'(|\nabla u_n|)}{|\nabla u_n|} = 1,
\end{equation*}
in weak sense. Assume for a moment that $u_n$ is regular enough so that we can differentiate, then $u_n$ satisfies
\begin{equation*}
- \frac{|\nabla u_n| f_n''(|\nabla u_n|) - f_n'(|\nabla u_n|)}{|\nabla u_n|^3} \Delta_\infty u_n - \frac{f_n'(|\nabla u_n|)}{|\nabla u_n|} \Delta u_n = 1,
\end{equation*}
where
\begin{equation*}
\Delta_\infty u = \sum_{i,j=1}^N \frac{\pa^2 u}{\pa x_i \pa x_j} \frac{\pa u}{\pa x_i}\frac{\pa u}{\pa x_j}.
\end{equation*}
Since this equation is fully nonlinear and degenerate elliptic, it makes sense to define and study its viscosity solutions (see \cite{CC}).

Let $P\in\RR^N$ and $X\in\mathcal{S}^N$, where $\mathcal{S}^N$ is the space of real-valued $N\times N$ symmetric matrices. Consider the function
\begin{equation}\label{Fn}
F_n(P,X):= \begin{cases}
- \dfrac{|P| f_n''(|P|)-f_n'(|P|)}{|P|^3} P\cdot X P - \dfrac{f_n'(|P|)}{|P|} \tr(X) - 1, & P\neq 0,\\
-1, & P=0.
\end{cases}
\end{equation}
Notice that, if
\begin{equation} \label{fn hp II}
\lim\limits_{s \to 0^+} \dfrac{s f_n''(s) - f_n'(s)}{s^3}=0, \quad \textmd{ and } \quad \lim\limits_{s \to 0^+} \dfrac{f_n'(s)}{s}=0,
\end{equation}
then $F_n$ is continuous. For future use, we shall assume that the sequence $\{f_n\}_{n\in\NN}$ is such that
\begin{equation} \label{fn hp III}
\lim\limits_{n \to +\infty} \frac{s f_n''(s) - f_n'(s)}{s^3}=0\quad \textmd{and } \quad \lim\limits_{n \to +\infty} \frac{f_n'(s)}{s}=0
\end{equation}
uniformly on the compact sets of $[0,\sigma)$; here, thanks to \eqref{fn hp II}, the functions in the limits are understood as continuously extended to $0$ at $s=0$.

\vspace{0.5em}

We shall introduce the definition of viscosity solution of an equation of the form $F(\nabla v,D^2 v)=0$ (see \cite{JLM}).

{\bf Definition.} An upper semicontinuous function $u$ defined in $\Omega$ is a \emph{viscosity
subsolution} of
\begin{equation} \label{eq F_n=0}
F(\nabla v,D^2 v)=0,
\end{equation}
$x\in\Omega$, if, whenever $x_0\in\Omega$ and $\phi\in C^2(\Omega)$ are such that
$u(x_0)=\phi(x_0)$ and $u(x)<\phi(x)$ if $x\neq x_0$, then
\begin{equation*}
F(\nabla \phi(x_0),D^2 \phi(x_0)) \leq 0.
\end{equation*}
A lower semicontinuous function $u$ defined in $\Omega$ is a \emph{viscosity
supersolution} of \eqref{eq F_n=0} if, whenever $x_0\in\Omega$ and $\phi\in C^2(\Omega)$ are such that
$u(x_0)=\phi(x_0)$ and $u(x)>\phi(x)$ if $x\neq x_0$, then
\begin{equation*}
F(\nabla \phi(x_0),D^2 \phi(x_0)) \geq 0.
\end{equation*}
Finally, $u\in C^0(\Omega)$ is a \emph{viscosity solution} of \eqref{eq F_n=0} if it is both a viscosity subsolution and a viscosity
supersolution of \eqref{eq F_n=0}.
\vspace{0.5em}

\begin{lemma}\label{viscosityEqForVApprox}
Let $u_n$ be the minimizer of $J_n$, where $f_n \in C^1([0,+\infty)) \cup C^3((0,+\infty))$ satisfies \eqref{fn hp II} and is such that $f_n''(s)>0$ for $s>0$. Then $u_n$ is a viscosity solution of \eqref{eq F_n=0}, with $F=F_n$ and $F_n$ given by \eqref{Fn}.
\end{lemma}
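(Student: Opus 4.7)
The plan is to argue by contradiction, following the classical equivalence technique between weak and viscosity solutions of divergence-form equations (compare with \cite{JLM}). I would prove the subsolution property only; the supersolution case is obtained by the mirror argument with inequalities reversed.

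The preliminary step I would carry out is the classical divergence expansion: for every $\phi \in C^2(\Omega)$ the identity
\[
F_n(\nabla\phi,D^2\phi) = -\diver\!\left(\frac{f_n'(|\nabla\phi|)}{|\nabla\phi|}\nabla\phi\right)-1
\]
holds wherever $\nabla\phi \neq 0$. Hypothesis \eqref{fn hp II} is what makes this identity useful in all of $\Omega$: it guarantees that the vector field $p\mapsto f_n'(|p|)p/|p|$ extends to a $C^1$ map on $\RR^N$ with vanishing Jacobian at the origin, so the divergence on the right hand side is classically defined everywhere (and equals $-1$ at critical points of $\phi$), and moreover $F_n$ is continuous on $\RR^N\times\mathcal{S}^N$.

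Next, assuming for contradiction that some admissible $\phi$ touching $u_n$ from above strictly at $x_0$ satisfies $F_n(\nabla\phi(x_0),D^2\phi(x_0)) > 0$, I would use continuity of $F_n$ to produce a ball $B_r(x_0)$ and a constant $\varepsilon>0$ with $F_n(\nabla\phi,D^2\phi)\geq\varepsilon$ on the whole of $\overline{B_r(x_0)}$ and $\phi-u_n\geq c$ on $\partial B_r(x_0)$ for some $c>0$. The shifted function $\tilde\phi=\phi-c/2$ then satisfies $u_n(x_0)>\tilde\phi(x_0)$ and $u_n<\tilde\phi$ on $\partial B_r(x_0)$, so that the truncation $\psi=(u_n-\tilde\phi)^+$ is a nonnegative, compactly supported, $W^{1,\infty}$ test function that is strictly positive at $x_0$.

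Testing with $\psi$ in the weak equation \eqref{Euler eq 2 fn} for $u_n$, and in the classical strict inequality for $\tilde\phi$ (after integration by parts, justified by the $C^1$ regularity from the first step), and then subtracting, yields
\[
\int_{\{u_n>\tilde\phi\}}\!\!\left[\frac{f_n'(|\nabla u_n|)}{|\nabla u_n|}\nabla u_n-\frac{f_n'(|\nabla\tilde\phi|)}{|\nabla\tilde\phi|}\nabla\tilde\phi\right]\!\cdot\!(\nabla u_n-\nabla\tilde\phi)\,dx\;\leq\;-\varepsilon\!\int\!\psi\,dx\;<\;0,
\]
contradicting the strict monotonicity of $p\mapsto f_n'(|p|)p/|p|$ that comes from strict convexity of $f_n$ on $(0,+\infty)$. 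The hard part of the argument, and the reason hypothesis \eqref{fn hp II} is imposed, is exactly the first step: without it the divergence identity makes no classical sense where $\nabla\phi$ vanishes and $F_n$ is discontinuous, so neither the strict-inequality neighbourhood $B_r(x_0)$ nor the integration by parts on $\tilde\phi$ can be produced cleanly, and the monotonicity contradiction would break down.
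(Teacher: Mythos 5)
Your proposal is correct and follows the same overall scheme as the paper: assume for contradiction a strict inequality for $F_n$ at the touching point, propagate it by continuity to a ball, shift the test function by half the boundary gap, test the weak formulation \eqref{Euler eq 2 fn} against the truncation, integrate by parts on the test function, subtract, and contradict the monotonicity of $p\mapsto f_n'(|p|)p/|p|$ (which is the gradient of the convex radial function $p\mapsto f_n(|p|)$; note that plain monotonicity, not strict monotonicity, already gives the contradiction against a strictly negative integral). The one structural difference is the treatment of the critical--gradient case. You observe that \eqref{fn hp II} upgrades the vector field $A(p)=f_n'(|p|)p/|p|$ to a $C^1$ map on all of $\RR^N$ with $DA(0)=0$, so that $F_n(\nabla\phi,D^2\phi)=-\diver\!\big(A(\nabla\phi)\big)-1$ holds as a classical identity on all of $\Omega$ and the integration by parts goes through regardless of where $\nabla\phi$ vanishes; this makes the subsolution and supersolution halves genuinely mirror images. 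The paper instead separates the case $\nabla\phi(x_0)=0$: it is dismissed as trivial for subsolutions (since $F_n(0,X)=-1\le 0$ is automatic), while for supersolutions the paper shows that no test function with vanishing gradient can touch $u_n$ from below, via a comparison with the quadratic $-c|x-x_0|^2+u_n(x_0)$, and then re-runs the integration-by-parts argument. Your unified treatment is cleaner and makes explicit a $C^1$-regularity fact that the paper uses only implicitly when it integrates by parts near the critical point of $\eta$; the paper's split, in turn, has the pedagogical merit of making visible exactly how the two halves of the viscosity definition behave asymmetrically. One small slip in your write-up: after the extension the divergence $\diver\!\big(A(\nabla\phi)\big)$ equals $0$ at critical points of $\phi$ (since $DA(0)=0$), and it is the full right-hand side $-\diver\!\big(A(\nabla\phi)\big)-1$ that equals $-1$ there; the phrasing ``the divergence $\ldots$ equals $-1$'' should be corrected accordingly, though it does not affect the argument.
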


\begin{proof}
Notice that, since $f_n$ satisfies \eqref{fn hp II}, then $F_n$ is continuous. We present the details for the case of supersolutions. Let $x_0\in\Omega$ and $\phi\in C^2(\Omega)$ be such that $u_n(x_0)=\phi(x_0)$ and $u_n(x)>\phi(x)$ for $x \neq x_0$. Assume that $\nabla \phi(x_0) \neq 0$; we have to show that
\begin{equation*}
- \dfrac{|\nabla \phi (x_0)| f_n''(|\nabla \phi (x_0)|)-f_n'(|\nabla \phi (x_0)|)}{|\nabla \phi (x_0)|^3} \Delta_\infty \phi (x_0) - \dfrac{f_n'(|\nabla \phi (x_0)|)}{|\nabla \phi (x_0)|} \Delta \phi(x_0) - 1 \geq 0.
\end{equation*}
By contradiction, suppose that this is not the case. By continuity, there exists $r> 0$ small enough such that
\begin{equation*}
- \dfrac{|\nabla \phi (x)| f_n''(|\nabla \phi (x)|)-f_n'(|\nabla \phi (x)|)}{|\nabla \phi (x)|^3} \Delta_\infty \phi (x) - \dfrac{f_n'(|\nabla \phi (x)|)}{|\nabla \phi (x)|} \Delta \phi (x) < 1,
\end{equation*}
for any $|x-x_0|<r$. Let $m=\inf\{ u_n(x) - \phi(x):\ |x-x_0|=r\}$ and set $\eta = \phi + \frac{1}{2} m$. Since $m>0$ then $\eta < u_n$ on $\pa B_r(x_0)$, $\eta(x_0)>u_n(x_0)$ and
\begin{equation*}
- \dfrac{|\nabla \eta (x)| f_n''(|\nabla \eta (x)|)-f_n'(|\nabla \eta (x)|)}{|\nabla \eta (x)|^3} \Delta_\infty \eta (x) - \dfrac{f_n'(|\nabla \eta (x)|)}{|\nabla \eta (x)|} \Delta \eta (x) < 1,
\end{equation*}
for any $|x-x_0|<r$. By multiplying by $(\eta-u_n)^+$, integrating in $B_r(x_0)$ and using an integration by parts, we have
\begin{equation} \label{eq weak to visc 1}
\ints_{\{\eta > u_n\}} f_n'(|\nabla \eta|) \frac{\nabla \eta}{|\nabla \eta|} \cdot \nabla (\eta - u_n) dx < \ints_{\{\eta > u_n\}} (\eta - u_n) dx.
\end{equation}
Notice that, since $\eta(x_0)>u_n(x_0)$ and $\eta-u_n$ is continuous, the Lebesgue measure of $\{\eta > u_n\}$ is strictly positive. The function $(\eta-u_n)^+$ extended to zero outside $B_r(x_0)$ can be used as a test function in \eqref{Euler eq 2 fn}:
\begin{equation} \label{eq weak to visc 2}
\ints_{\{\eta > u_n\}} f_n'(|\nabla u_n|) \frac{\nabla u_n}{|\nabla u_n|} \cdot \nabla (\eta - u_n) dx = \ints_{\{\eta > u_n\}} (\eta - u_n) dx.
\end{equation}
Subtracting \eqref{eq weak to visc 2} from \eqref{eq weak to visc 1} we have
\begin{equation} \label{eq weak to visc 3}
\ints_{\{\eta > u_n\}} \left[ f_n'(|\nabla \eta|) \frac{\nabla \eta}{|\nabla \eta|} - f_n'(|\nabla u_n|) \frac{\nabla u_n}{|\nabla u_n|} \right] \cdot \nabla (\eta - u_n) dx < 0.
\end{equation}
Since
\begin{multline}
\left[ f_n'(|\nabla \eta|) \frac{\nabla \eta}{|\nabla \eta|} - f_n'(|\nabla u_n|) \frac{\nabla u_n}{|\nabla u_n|} \right] \cdot \nabla (\eta - u_n) = \\ =
f_n'(|\nabla \eta|)|\nabla \eta| + f_n'(|\nabla u_n|) |\nabla u_n| + \\ - f_n'(|\nabla \eta|) \frac{\nabla \eta}{|\nabla \eta|} \cdot \nabla u_n - f_n'(|\nabla u_n|) \frac{\nabla u_n}{|\nabla u_n|} \cdot \nabla \eta,
\end{multline}
Cauchy-Schwarz inequality yields
\begin{multline*}
\left[ f_n'(|\nabla \eta|) \frac{\nabla \eta}{|\nabla \eta|} - f_n'(|\nabla u_n|) \frac{\nabla u_n}{|\nabla u_n|} \right] \cdot \nabla (\eta - u_n) \geq \\ \geq \Big( f_n'(|\nabla \eta|) - f_n'(|\nabla u_n|) \Big) \Big( |\nabla \eta| - |\nabla u_n| \Big);
\end{multline*}
from the convexity of $f_n$ we obtain
\begin{equation}\label{eq weak to visc 4}
\left[ f_n'(|\nabla \eta|) \frac{\nabla \eta}{|\nabla \eta|} - f_n'(|\nabla u_n|) \frac{\nabla u_n}{|\nabla u_n|} \right] \cdot \nabla (\eta - u_n) \geq 0,
\end{equation}
which gives the desired contradiction on account of \eqref{eq weak to visc 3}.

To complete the proof that $u_n$ is a viscosity supersolution of \eqref{eq F_n=0}, we shall prove that if $\phi$ is a test function touching $u_n$ at $x_0$ from below, then $\nabla \phi (x_0) \neq 0$ (i.e. the set of test functions touching $u_n$ from below with vanishing gradient is the empty set).

By contradition, let us assume that $\phi \in C^2(\Omega)$ is such that $u_n(x_0)=\phi(x_0)$, $u_n(x)>\phi(x)$ for $x \neq x_0$ and $\nabla \phi(x_0) = 0$. Thus, there exists $c>0$ and $r_1>0$ such that $u_n(x)>\phi(x) > \psi(x)$ for $0<|x-x_0|<r_1$, where
\begin{equation*}
\psi(x)=-c |x-x_0|^2 + u_n(x_0).
\end{equation*}
We notice that $\psi$ is of class $C^2$ and satisfies $F_n(\nabla \psi (x), D^2\psi (x)) < 0$ for every $x$ in some ball of radius $r_2$ centered at $x_0$, i.e. there exists $r_2>0$ such that $\psi$ is a strict classical subsolution of $F_n(Dv,D^2v)=0$ in $B_{r_2}(x_0)$. 

Let $r=\min(r_1,r_2)/2$, $m=\inf\{ u_n(x) - \phi(x):\ |x-x_0|=r\}$ and set $\eta = \phi + \frac{1}{2} m$. Notice that $\eta<u_n$ on $\partial B_r(x_0)$, $\eta(x_0)>u_n(x_0)$ and  $F_n(\nabla \eta, D^2 \eta) < 0$ in $B_r(x_0)$. As done in the first part of the proof, we use the function $(\eta-u_n)^+$ extended to zero outside $B_r(x_0)$ as a test function in \eqref{Euler eq 2 fn} and we obtain \eqref{eq weak to visc 3}; then, from \eqref{eq weak to visc 4} we get a contradiction. Thus, the set of test functions touching $u_n$ from below with vanishing gradient is the empty set and hence $u_n$ is a viscosity supersolution of \eqref{eq F_n=0}.

To prove that $u_n$ is a subsolution of \eqref{eq F_n=0}, we first consider a test function $\phi$ touching $u_n$ at $x_0$ from above with $\nabla \phi(x_0) \neq 0$. This case in analogous to the supersolution case. The case $\nabla \phi(x_0)=0$ is simpler than before, since in this case $F_n( \nabla \phi(x_0), D^2\phi(x_0)) \leq 0$ is straightforwardly satisfied.
\end{proof}

\begin{theorem} \label{viscosityEqForV f C2}
Let $u$ be the minimizer of \eqref{J(v)}, with $f$ satisfying \eqref{f hp} and $f\in C^2((0,+\infty))$. Assume that there exists a sequence $\{ f_n \}_{n\in\NN}$ satisfying (i)--(iv) in Proposition \ref{prop:approx minimizers}, \eqref{fn hp II}, \eqref{fn hp III} and such that $f_n''$ converges to $f''$ uniformly on the compact sets contained in $(0,+\infty)$.

Then, $u$ is a viscosity solution of
\begin{equation}\label{eq visc solu u f C2}
\min \left(- \frac{|\nabla u| f''(|\nabla u|) - f'(|\nabla u|)}{|\nabla u|^3} \Delta_\infty u - \frac{f'(|\nabla u|)}{|\nabla u|} \Delta u - 1,\ |\nabla u| - \sigma \right) =0.
\end{equation}
\end{theorem}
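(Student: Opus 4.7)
The plan is to deduce \eqref{eq visc solu u f C2} from the approximate viscosity equations of Lemma \ref{viscosityEqForVApprox} by the standard stability argument for viscosity solutions. By Proposition \ref{prop:approx minimizers}, the minimizers $u_n$ of $J_n$ converge uniformly on $\ovr{\Omega}$ to $u$, and each $u_n$ solves $F_n(\na v, D^2 v)=0$ in the viscosity sense. Given a $C^2$ test function $\phi$ that strictly touches $u$ at $x_0$ (from above or from below), uniform convergence provides points $x_n\to x_0$ at which $u_n-\phi$ attains a local extremum of the same type, so that $F_n(\na\phi(x_n),D^2\phi(x_n))$ satisfies the corresponding inequality; everything then reduces to identifying the limit of this quantity.

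For the subsolution side the case analysis is immediate. If $|\na\phi(x_0)|\le\si$, the second entry of the $\min$ is already non-positive and nothing remains to be proved. If $|\na\phi(x_0)|>\si$, continuity keeps $|\na\phi(x_n)|$ eventually inside a compact subset of $(\si,+\infty)$, and the assumed uniform convergence of $f_n'$, $f_n''$ to $f'$, $f''$ on such compacta allows me to pass to the limit in $F_n(\na\phi(x_n),D^2\phi(x_n))\le 0$, yielding $F(\na\phi(x_0),D^2\phi(x_0))\le 0$.

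The supersolution side is where I expect the main obstacle, since I must establish both $|\na\phi(x_0)|\ge\si$ and $F(\na\phi(x_0),D^2\phi(x_0))\ge 0$. I would proceed by contradiction on the gradient condition. If $|\na\phi(x_0)|<\si$, then a neighbourhood of $x_0$ keeps $|\na\phi|$ inside a compact subset of $[0,\si)$, so hypothesis \eqref{fn hp III} forces $F_n(\na\phi(x_n),D^2\phi(x_n))\to-1$, contradicting $F_n\ge 0$. The borderline case $|\na\phi(x_0)|=\si$ is the delicate one: since $f\in C^2((0,+\infty))$ and $f'\equiv 0$ on $[0,\si]$, continuity yields $f'(\si)=f''(\si)=0$, and I would combine the uniform convergence of $f_n'$, $f_n''$ on a small compact interval around $\si$ with \eqref{fn hp III} to show that the limit of $F_n(\na\phi(x_n),D^2\phi(x_n))$ is again $-1$, irrespective of whether $|\na\phi(x_n)|$ approaches $\si$ from above, from below, or alternately; this gives the same contradiction. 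Once $|\na\phi(x_0)|>\si$ is secured, the same stability passage as in the subsolution step yields $F(\na\phi(x_0),D^2\phi(x_0))\ge 0$, completing the proof.
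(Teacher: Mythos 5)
Your proposal is correct and follows essentially the same stability argument as the paper: pass to the limit in $F_n(\nabla\phi(x_n),D^2\phi(x_n))\gtrless 0$ at the approximate extremum points $x_n\to x_0$, splitting on the size of $|\nabla\phi(x_0)|$ relative to $\sigma$.

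The one genuine (and beneficial) divergence is your separate handling of the borderline case $|\nabla\phi(x_0)|=\sigma$ in the supersolution step. The paper simply groups $=\sigma$ with $\geq\sigma$ and asserts that the limit of $F_n\geq 0$ gives $F\geq 0$; read literally at $|\nabla\phi(x_0)|=\sigma$ this would say $-1\geq 0$ (since $f\in C^2((0,+\infty))$ and $f'\equiv 0$ on $[0,\sigma]$ force $f'(\sigma)=f''(\sigma)=0$), which is a contradiction showing this case cannot occur — so the supersolution inequality holds vacuously there. Your version makes this contradiction explicit, which is cleaner. One small inaccuracy in your sketch: you do not need \eqref{fn hp III} for the $=\sigma$ case, and in fact it gives you nothing there, since \eqref{fn hp III} only provides uniform limits on compacta of $[0,\sigma)$ and says nothing about sequences $s_n\to\sigma$. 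The uniform convergence of $f_n'$ and $f_n''$ on a compact interval $[\sigma/2,2\sigma]\subset(0,+\infty)$, together with continuity of $f'$, $f''$ at $\sigma$, already forces $F_n(\nabla\phi(x_n),D^2\phi(x_n))\to -1$ regardless of which side $|\nabla\phi(x_n)|$ approaches $\sigma$ from. With that correction, your argument is complete.
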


\begin{proof}
Let $\{ f_n \}_{n\in\NN}$ be an approximating sequence of the function $f$ satisfying (i)--(iv) in Proposition \ref{prop:approx minimizers}, \eqref{fn hp II}, \eqref{fn hp III} and such that $f_n''$ converges to $f''$ uniformly on the compact sets contained in $(0,+\infty)$. We refer to Theorem \ref{thm exist fn} for the existence of such a sequence in some relevant cases. From Proposition \ref{prop:approx minimizers}, we can assume that $u_n$ converges to $u$ uniformly as $n$ tends to infinity. By using a standard argument from the theory of viscosity solutions (see \cite{CIL} and \cite{Ko}), we shall prove that $u$ is a viscosity supersolution and subsolution of \eqref{eq visc solu u f C2}. The two proofs are not symmetric and we prove firstly that $u$ is a viscosity supersolution and then that it is also a viscosity subsolution.

Assume $\phi$ is a smooth function touching $u$ from below at $\hat{x}\in \Omega$, \ie, $u(\x)=\phi(\x)$ and $u(x)>\phi(x)$ for any $x\neq \x$.
Since $u_n$ is a viscosity solution of \eqref{eq F_n=0} and $u_n$ converges uniformly to $u$, there exist $\{x_n\}_{n\in\NN} \subset \Omega$ such that
  \begin{enumerate}[(i) ]
  \item for any $x\in\Omega$, $u_n(x_n)-\phi(x_n)\leq u_n(x)-\phi(x)$;
  \item $x_n$ tends to $\x$ as $n$ tends to infinity;
  \end{enumerate}
see for instance \cite{JLM} p. 95. Being $u_n$ a viscosity supersolution of \eqref{eq F_n=0}, we can conclude that
\begin{equation*}
    F_n(\nabla \phi(x_n),D^2\phi(x_n)) \geq 0.
\end{equation*}

Let assume that $|\nabla \phi(\x)|<\si$; since $\phi$ is of class $C^2$ and from (ii), there exists $\delta>0$ such that $|\nabla \phi(x_n)| \leq \sigma - \delta$ for $n$ large enough. By taking the limit as $n\to \infty$ and from \eqref{fn hp III} we get a contradiction. Thus, we may exclude that $|\nabla \phi(\x)|<\si$.

Now assume that $|\nabla \phi(\x)| \geq \si$. Since $f_n'$ and $f_n''$ converge uniformly on compact sets to $f'$ and $f''$, respectively, by taking the limit as $n\to \infty$ we get that both
  \begin{equation*}
   - \frac{|\nabla \phi(\x)| f''(|\nabla \phi(\x)|) - f'(|\nabla \phi(\x)|)}{|\nabla \phi(\x)|^3} \Delta_\infty \phi(\x) - \frac{f'(|\nabla \phi(\x)|)}{|\nabla \phi(\x)|} \Delta \phi(\x) - 1 \ge 0,
  \end{equation*}
and
\begin{equation*}
|\nabla \phi(\x)|-\si \geq 0
\end{equation*}
are satisfied. Hence the claim is proven.

Now, we prove that $u$ is a viscosity subsolution of \eqref{eq visc solu u f C2}. Assume $\phi$ is a smooth function such that $u(\x)=\phi(\x)$ and $u(x)<\phi(x)$ for any $x\neq \x$. As claimed at the previous case, there exists a sequence $\{x_n\}_{n\in\NN}$ such that
\begin{enumerate}[(i) ]
 \item $u_n(x_n)-\phi(x_n)\geq u_n(x)-\phi(x)$;
 \item $x_n$ tends to $\x$ as $n$ tends to infinity.
\end{enumerate}
If $|\nabla \phi(\x)|\leq \si$, then obviously
\begin{multline*}
\min \Big(- \frac{|\nabla \phi(\x)| f''(|\nabla \phi(\x)|) - f'(|\nabla \phi(\x)|)}{|\nabla \phi(\x)|^3} \Delta_\infty \phi(\x) - \frac{f'(|\nabla \phi(\x)|)}{|\nabla \phi(\x)|} \Delta \phi(\x) - 1,\\ |\nabla \phi(\x)|-\si \Big) \leq 0
\end{multline*}
holds. In case $|\nabla \phi(\x)|>\si$, then $|\nabla \phi (x_n)| \geq \sigma + \delta$ for some $\delta>0$ and for any $n$ large enough. Since $u_n$ is a viscosity subsolution of \eqref{eq F_n=0}, then we have $F_n(\nabla \phi(x_n),D^2 \phi(x_n)) \leq 0$. Since $f_n$ and its first and second derivatives converges uniformly as $n \to +\infty$, by taking the limit leads to
\begin{equation*}
   - \frac{|\nabla \phi(\x)| f''(|\nabla \phi(\x)|) - f'(|\nabla \phi(\x)|)}{|\nabla \phi(\x)|^3} \Delta_\infty \phi(\x) - \frac{f'(|\nabla \phi(\x)|)}{|\nabla \phi(\x)|} \Delta \phi(\x) - 1 \leq 0,
  \end{equation*}
which completes the proof.
\end{proof}

Now, we assume that $f$ satisfies \eqref{f hp} and
\begin{equation}\label{lim f'' infty}
\lim\limits_{s \to \sigma^+} f''(s) =+\infty.
\end{equation}
Thus, $f \not\in C^2((0,+\infty))$ (i.e. $f$ is not twice differentiable at $s=\sigma$). Since it is not possible to choose $f_n$ such that $f_n''$ converges uniformly to $f''$, we can not proceed as in Theorem \ref{viscosityEqForV f C2}.

Let
\begin{equation}\label{alpha}
a(s) =
\begin{cases}
\dfrac{f'(s)}{s f''(s)},& s>\sigma,\\
0,& 0 \leq s \leq \sigma,
\end{cases}
\end{equation}
and
\begin{equation}\label{gamma}
b(s) =
\begin{cases}
\dfrac{s^2}{f''(s)},& s > \sigma,\\
0, & 0 \leq s \leq \sigma.
\end{cases}
\end{equation}
Notice that $a,b \in C^0([0,+\infty))$. Analogously, we define
\begin{equation}\label{alpha e gamma n}
a_n(s) =\dfrac{f_n'(s)}{s f_n''(s)}\  \textmd{ and } \
b_n(s) = \dfrac{s^2}{f_n''(s)},
\end{equation}
for $s>0$.

\begin{theorem}\label{viscosityEqForV}
Let $u$ be the minimizer of \eqref{J(v)}, with $f$ satisfying \eqref{f hp} and \eqref{lim f'' infty}. Assume that there exists a sequence $\{ f_n \}_{n\in\NN}$ satisfying (i)--(iv) in Proposition \ref{prop:approx minimizers}, \eqref{fn hp II} and \eqref{fn hp III}. Let $a,b,a_n,b_n$ be defined by \eqref{alpha}--\eqref{alpha e gamma n} and assume that $f_n$ is such that $a_n$ and $b_n$ converge uniformly to $a$ and $b$ in the compact sets contained in $(0,+\infty)$ and $(\si,+\infty)$, respectively.

Then $u$ is a viscosity solution of
\begin{equation}  \label{eq:limitForV}
\min \big(-[1 - a(|\nabla u|)]\Delta_\infty u - |\nabla u|^2 a(|\nabla u|)\Delta u - b(|\nabla u|), |\nabla u(x)|-\si \big)=0.
\end{equation}
\end{theorem}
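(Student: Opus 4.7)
The strategy mirrors that of Theorem \ref{viscosityEqForV f C2}: obtain $u$ as a uniform limit of the regularised minimizers $u_n$ (Proposition \ref{prop:approx minimizers}) and pass to the limit in the equations they satisfy by Lemma \ref{viscosityEqForVApprox}. The difficulty introduced by \eqref{lim f'' infty} is that $f_n''$ cannot converge to $f''$ uniformly near $\si$, so the coefficients of $F_n$ blow up. The key device is to multiply the equation $F_n=0$ by the strictly positive factor $|P|^2/f_n''(|P|)$, obtaining the equivalent equation $G_n(\nabla u_n, D^2 u_n)=0$, where
\[
G_n(P,X) := -\bigl(1-a_n(|P|)\bigr)\,P\cdot X P \;-\; |P|^2 a_n(|P|)\,\tr(X)\;-\;b_n(|P|).
\]
The coefficients $a_n, b_n$ are exactly those in \eqref{alpha e gamma n}, which by hypothesis converge uniformly on compact subsets of $(0,+\infty)$ and $(\si,+\infty)$ to $a,b$. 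Since the multiplier is positive wherever $P\ne 0$, Lemma \ref{viscosityEqForVApprox} yields that $u_n$ is also a viscosity solution of $G_n=0$.

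For a smooth $\phi$ touching $u$ at $\x$ from below (resp.\ above), the standard stability argument (exactly as in Theorem \ref{viscosityEqForV f C2}) produces $x_n\to\x$ with $F_n(\nabla\phi(x_n),D^2\phi(x_n))\ge 0$ (resp.\ $\le 0$), and thus the same sign on $G_n(\nabla\phi(x_n),D^2\phi(x_n))$. For the subsolution inequality $\min(\cdot,\cdot)\le 0$ it suffices that one term be non-positive: if $|\nabla\phi(\x)|\le\si$ the gradient factor $|\nabla\phi(\x)|-\si\le 0$ already gives this, while if $|\nabla\phi(\x)|>\si$ the sequence $|\nabla\phi(x_n)|$ lies eventually in a compact subset of $(\si,+\infty)$, so uniform convergence of $a_n,b_n$ permits passage to the limit in $G_n\le 0$, yielding the differential inequality.

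For the supersolution inequality both terms of the min must be non-negative. The case $|\nabla\phi(\x)|<\si$ is excluded exactly as in Theorem \ref{viscosityEqForV f C2}: taking the limit of $F_n(\nabla\phi(x_n),D^2\phi(x_n))\ge 0$ and invoking \eqref{fn hp III} forces the contradiction $-1\ge 0$. The case $|\nabla\phi(\x)|>\si$ is a straightforward passage to the limit in $G_n\ge 0$, using uniform convergence of $a_n,b_n$ on a compact subset of $(\si,+\infty)$.

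The delicate borderline case is $|\nabla\phi(\x)|=\si$ on the supersolution side: the gradient inequality is trivial, and since $a(\si)=b(\si)=0$ the differential inequality reduces to $-\Delta_\infty\phi(\x)\ge 0$. Because $|\nabla\phi(\x)|=\si>0$, the sequence $|\nabla\phi(x_n)|$ lies eventually in a compact subset of $(0,+\infty)$, so $a_n(|\nabla\phi(x_n)|)\to a(\si)=0$ by the assumed convergence of $a_n$. The non-uniform convergence of $b_n$ up to $\si$ is bypassed by observing that $b_n\ge 0$: discarding $-b_n(|\nabla\phi(x_n)|)$ in $G_n\ge 0$ only strengthens the inequality, and the quasilinear part converges to $-\Delta_\infty\phi(\x)$. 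This is the main technical subtlety, and it is precisely the reason the reformulation via $G_n$ is essential rather than a direct limit in $F_n$.
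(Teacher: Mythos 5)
Your proposal is correct and follows the paper's overall strategy step for step: approximate by $u_n$, use the standard stability argument to produce points $x_n\to\hat{x}$ at which $F_n(\nabla\phi(x_n),D^2\phi(x_n))$ has the right sign, renormalize by multiplying by the positive factor $|\nabla\phi(x_n)|^2/f_n''(|\nabla\phi(x_n)|)$ so that the coefficients become $a_n,b_n$, and then split into the cases $|\nabla\phi(\hat{x})|<\sigma$, $>\sigma$, $=\sigma$; the first two cases are treated exactly as in the paper. The one genuine difference is the delicate supersolution case $|\nabla\phi(\hat{x})|=\sigma$. The paper argues by contradiction in the un-normalized form: assuming $\Delta_\infty\phi(\hat{x})>0$, it rewrites $F_n\ge 0$ as $-\frac{f_n''(|\nabla\phi(x_n)|)}{|\nabla\phi(x_n)|^2}\Delta_\infty\phi(x_n)\ \ge\ 1+\frac{f_n'(|\nabla\phi(x_n)|)}{|\nabla\phi(x_n)|}\Delta\phi(x_n)-\frac{f_n'(|\nabla\phi(x_n)|)}{|\nabla\phi(x_n)|^3}\Delta_\infty\phi(x_n)$ and uses only the uniform convergence $f_n'\to f'$ together with $f'(\sigma)=0$ and $f_n''>0$ to make the right-hand side at least $1/2$ while the left-hand side is negative. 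You instead argue directly in the normalized form: since $b_n>0$, you drop it from $G_n\ge 0$ and pass to the limit using the uniform convergence of $a_n$ to $a$ on a compact neighborhood of $\sigma$ (legitimate, since the hypothesis gives convergence on compacts of $(0,+\infty)$, not just $(\sigma,+\infty)$) and $a(\sigma)=0$, obtaining $-\Delta_\infty\phi(\hat{x})\ge 0$ without any contradiction argument. Both routes are sound; yours is slightly more direct and isolates why only the $b_n$-convergence fails near $\sigma$, while the paper's needs nothing about $a_n$ across $\sigma$. One phrasing caveat: the claim that $u_n$ is a viscosity solution of $G_n=0$ would need separate attention at test points with vanishing gradient, where the multiplier degenerates; this is harmless here because, exactly as in the paper, you only use the pointwise inequalities $G_n(\nabla\phi(x_n),D^2\phi(x_n))\ge 0$ (resp. $\le 0$) at points where $\nabla\phi(x_n)\neq 0$.
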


\begin{proof}
The proof splits in two parts. First we prove that $u$ is a viscosity supersolution, then that it is also a subsolution. The earlier is slightly more involved and we deal with it first. Notice that the existence of the sequence $\{ f_n \}_{n\in\NN}$ is proved in Theorem \ref{thm exist fn} for some relevant cases.

\emph{The function }$u$\ \emph{is a viscosity supersolution of
    \eqref{eq:limitForV}}. \\
Assume $\phi$ is a smooth function touching $u$ from below at $\hat{x}\in \Omega$, \ie, $u(\x)=\phi(\x)$ and $u(x)>\phi(x)$ for any $x\neq \x$.
Recall that $u_n$ is a viscosity solution of \eqref{eq F_n=0} and that, from Proposition \ref{prop:approx minimizers}, we can assume that $u_n$ converges uniformly to $u$ as $n$ tends to infinity. Thus, there exist $\{x_n\}_{n\in\NN} \subset \Omega$ such that for any $x\in\Omega$, $u_n(x_n)-\phi(x_n)\leq u_n(x)-\phi(x)$ and $x_n$ tends to $\x$ as $n$ tends to infinity.

Since $u_n$ is a viscosity supersolution of \eqref{eq F_n=0}, we can conclude that
\begin{equation} \label{eq:FmSuccGt0}
    F_n(\nabla \phi(x_n),D^2\phi(x_n)) \geq 0.
\end{equation}

Let assume that $|\nabla \phi(\x)|<\si$. As done in the proof of Theorem \ref{viscosityEqForV f C2}, we get a contradiction and we may exclude that $|\nabla \phi(\x)|<\si$.

Now assume that $|\nabla \phi(\x)| > \si$. Hence, we may assume that $|\nabla \phi(x_n)| > \si$ (at least for $n$ large). By multiplying both sides of \eqref{eq:FmSuccGt0} by
\begin{equation*}
\frac{|\nabla \phi(x_n)|^2}{f_n''(|\nabla \phi(x_n)|)},
\end{equation*}
we have
\begin{equation*}
-\Big[1 - a_n(|\nabla \phi(x_n)|)\Big]\Delta_\infty \phi(x_n) -  |\nabla \phi(x_n)|^2 a_n(|\nabla \phi(x_n)|) \Delta \phi(x_n) - b_n(|\nabla \phi(x_n)|) \geq 0.
\end{equation*}
From the uniform convergence of $a_n$ and $b_n$ and by taking the limit as $n\to \infty$, we get that both
  \begin{equation*}
    -[1- a(|\nabla \phi(\x)|)] \Delta_\infty \phi(\x)-|\nabla \phi(\x)|^2a(|\nabla \phi(\x)|)\Delta\phi(\x)-b(|\nabla \phi(\x)|)\ge 0,
  \end{equation*}
and
\begin{equation*}
|\nabla \phi(\x)|-\si \geq 0
\end{equation*}
are satisfied.

It remains to consider the case $|\nabla \phi(\x)|=\si$. Since we do not have the uniform convergence of $b_n$ to $b$ in a neighborhood of $\si$, we must proceed in a different way. By contradiction, let us assume that $u$ is not a viscosity supersolution of \eqref{eq:limitForV}. For what we have proven in the first part of the proof, there exist $\x \in \Omega$ and a smooth function $\phi$ touching $u$ from below at $\hat{x}\in \Omega$ with $|\nabla \phi(\x)|=\si$ such that
\begin{multline*}
 \min \Big(-[1-a(|\nabla \phi(\x)|)] \Delta_\infty \phi(\x) - |\nabla \phi(\x)|^2 a(|\nabla \phi(\x)|)\Delta \phi(\x) - b(|\nabla \phi(\x)|), \\ |\nabla \phi(\x)|-\si \Big) < 0.
 \end{multline*}
Since $|\nabla \phi(\x)|=\si$, then $a(\si)=b(\si)=0$ and the above inequality yields
\begin{equation}\label{deltainfty <0}
-\Delta_\infty \phi(\x)<0.
\end{equation}
As done before, there exists $\{x_n\}_{n\in\NN} \subset \Omega$ such that for any $x\in\Omega$, $u_n(x_n)-\phi(x_n)\leq u_n(x)-\phi(x)$ and $x_n$ tends to $\x$ as $n$ tends to infinity. Notice that $u_n$ is a viscosity supersolution of \eqref{eq F_n=0} and then
\begin{equation*}
-\frac{f_n''(|\nabla \phi(x_n)|)}{|\nabla \phi(x_n)|^2} \Delta_\infty \phi (x_n) \geq 1 + \frac{f_n'(|\nabla \phi(x_n)|)}{|\nabla \phi(x_n)|} \Delta \phi(x_n) - \frac{f_n'(|\nabla \phi(x_n)|)}{|\nabla \phi(x_n)|^3} \Delta_\infty \phi(x_n).
\end{equation*}
Since $x_n$ converges to $\x$ and $\phi$ is of class $C^2$, then $|\nabla \phi(x_n)|$ converges to $\sigma$ and, from \eqref{deltainfty <0}, $\Delta_\infty \phi(x_n)>0$ for $n$ large enough. The uniform convergence of $f_n'$ to $f'$ yields the following contradiction:
\begin{equation*}
\frac{1}{2} \leq -\frac{f''(|\nabla \phi(x_n)|)}{|\nabla \phi(x_n)|^2} \Delta_\infty \phi (x_n) < 0,
\end{equation*}
for $n$ large enough. Hence the claim is proven.

  \emph{The function }$u$\ \emph{is a viscosity subsolution of
    \eqref{eq:limitForV}}. \\
  Assume $\phi$ is a smooth function such that $u(\x)=\phi(\x)$ and $u(x)<\phi(x)$ for any $x\neq \x$. Thus, there exists a sequence
  $\{x_n\}_{n\in\NN}$ such that $u_n(x_n)-\phi(x_n) \geq u_n(x)-\phi(x)$ and $x_n$ tends to $\x$ as $n$ tends to infinity.

  If $|\nabla \phi(\x)|\leq \si$, then obviously
  \begin{multline*}
 \min \Big(-[1-a(|\nabla \phi(\x)|)] \Delta_\infty \phi(\x) - |\nabla \phi(\x)|^2 a(|\nabla \phi(\x)|)\Delta \phi(\x) - b(|\nabla \phi(\x)|), \\ |\nabla \phi(\x)|-\si \Big) \leq 0
 \end{multline*}
  holds. In case $|\nabla \phi(\x)|>\si$, from the fact that $u_n$ is a
  viscosity subsolution of \eqref{eq F_n=0}, we can conclude
  (carrying out the same algebraic manipulation showed at the previous
  step)
\begin{equation*}
-\Big[1 - a_n(|\nabla \phi(x_n)|)\Big]\Delta_\infty \phi(x_n) -  |\nabla \phi(x_n)|^2 a_n(|\nabla \phi(x_n)|) \Delta \phi(x_n) - b_n(|\nabla \phi(x_n)|) \leq 0.
\end{equation*}
  Taking the limit leads to the desired conclusion.
\end{proof}

\vspace{0.5em}

\begin{rmk}
It is of interest to have an analogue of Theorem \ref{viscosityEqForV} when $f$ satisfies
\begin{equation}\label{lim f'' neq 0 infty}
0 < \lim_{s\to \si^+} f''(s) < +\infty.
\end{equation}
This case can be studied by using an argument analogue to the one used in the proof of Theorem \ref{viscosityEqForV} and under the additional assumption
\begin{equation*}
(\limsup_{n\to \infty} f_n'') (\sigma) \leq \lim_{s\to \si^+} f''(s).
\end{equation*}
We will not write in this paper the details of the proof. We just mention that if $f$ is given by \eqref{f congestion traffic} with $q=2$, then the approximating sequence given in the proof of the following Theorem satisfies this additional assumption.
\end{rmk}

\vspace{0.5em}

Theorems \ref{viscosityEqForV f C2} and \ref{viscosityEqForV} require the existence of a sequence $\{f_n\}_{n\in\NN}$ which satisfies several assumptions. In the following theorem, we construct an explicit example.

\begin{theorem} \label{thm exist fn}
Let $f$ satisfy \eqref{f hp} and let $a$ be given by \eqref{alpha}. Assume that there exists $\tilde{\sigma}>\sigma$ such that $a(s)$ is nondecreasing for $s\in[\sigma,\tilde{\sigma}]$. Then there exists $\{f_n\}_{n\in\NN}$ which satisfies the assumptions required in Theorems \ref{viscosityEqForV f C2} and \ref{viscosityEqForV}.
\end{theorem}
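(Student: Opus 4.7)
A clean construction of $\{f_n\}_{n\in\NN}$ is obtained by prescribing the quotient $a_n(s):=f_n'(s)/(sf_n''(s))$ and recovering $f_n$ from the first--order ODE $(\log f_n')'(s)=1/(s\,a_n(s))$. Fix a sequence $c_n\downarrow 0$; by continuity and the nondecreasing character of $a$ on $[\sigma,\tilde\sigma]$, there is, for $n$ large, a point $s_n\in(\sigma,\tilde\sigma]$ with $a(s_n)=c_n$, and $s_n\downarrow\sigma$. Set
\[
a_n(s) = \begin{cases} c_n, & 0<s\le s_n,\\ a(s), & s\ge s_n, \end{cases}
\]
(the two definitions agree at $s_n$) and mollify on a shrinking neighborhood $(s_n-\eta_n,s_n+\eta_n)$ to obtain $a_n\in C^\infty((0,\infty))$. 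Then take $f_n'(s):=B_n\exp\bigl(\int_1^s dt/(t\,a_n(t))\bigr)$, with $B_n$ chosen so that $f_n'\equiv f'$ on $[s_n+\eta_n,\infty)$, and $f_n(s):=\int_0^s f_n'(t)\,dt$.

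\textbf{Verification.} On $[0,s_n-\eta_n]$, $a_n\equiv c_n$ forces $f_n'(s) = C_n s^{1/c_n}$, whence $f_n'(0)=0$, $f_n''>0$ on $(0,\infty)$, and the order of vanishing of $f_n'$ at $0$ tends to infinity. A direct Taylor expansion then yields both limits in \eqref{fn hp II}, and the uniform exponential smallness of $(s/s_n)^{1/c_n}$ on compacts of $[0,\sigma)$ yields \eqref{fn hp III}. For Theorem \ref{viscosityEqForV}, the convergences $a_n\to a$ on compacts of $(0,\infty)$ and $b_n\to b$ on compacts of $(\sigma,\infty)$ are built in: on compacts of $(0,\sigma]$ one has $a_n=c_n\to 0 = a$; on compacts of $(\sigma,\infty)$ one has eventually $a_n=a$ and $b_n=b$; and the thin transition region $(s_n-\eta_n,s_n+\eta_n)$ shrinks to $\{\sigma\}$. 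For Theorem \ref{viscosityEqForV f C2}, $f\in C^2((0,\infty))$ yields a continuous, bounded $a$ with $a(\sigma)=0$, and the same scheme delivers $f_n''\to f''$ uniformly on compacts of $(0,\infty)$. The conditions (i)--(iv) of Proposition \ref{prop:approx minimizers} then reduce to routine bookkeeping; the monotone decrease $f_n'\downarrow f'$ is arranged by a monotone choice of $c_n$ and of the mollification parameters.

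\textbf{Main obstacle.} The hypothesis that $a$ is nondecreasing on $[\sigma,\tilde\sigma]$ is used exactly to ensure that the level set $\{a=c_n\}\cap[\sigma,\tilde\sigma]$ is a single, monotonically moving point $s_n\downarrow\sigma$; without this monotonicity, $a$ could oscillate near $\sigma$, and no single clean transition interval $[s_n-\eta_n,s_n+\eta_n]$ could be assembled to produce a continuous $a_n$ whose level sets decrease to $\sigma$. The only technical subtlety is to design the mollification so that $a_n$ stays within the narrow range $[0,c_n+o(1)]$ on the transition interval and so that the induced $f_n'$ remains pointwise monotone in $n$; both are standard gluing arguments given the monotonicity hypothesis.
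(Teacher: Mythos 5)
Your route is genuinely different from the paper's: the paper replaces $f'$ on $(0,\si+\ep]$ by an explicit combination of two powers, $f'(\si+\ep)\big[2(s/(\si+\ep))^{p_\ep}-(s/(\si+\ep))^{q_\ep}\big]$, with $p_\ep,q_\ep$ computed from $f'(\si+\ep)$, $f''(\si+\ep)$, $f'''(\si+\ep)$ so that the junction at $\si+\ep$ is exactly $C^3$ (monotonicity of $a$ is what makes the discriminant $\omega_\ep$ real), whereas you prescribe the quotient $a_n$ (constant $c_n$ below a level point $s_n$ of $a$, glued to $a$ by mollification) and integrate the ODE $(\log f_n')'=1/(s\,a_n)$. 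Structurally this can be made to work, and it avoids the third-derivative matching altogether, since a $C^1$ gluing of $a_n$ already yields $f_n\in C^3$ through $f_n''=f_n'/(s\,a_n)$.

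But there is a genuine gap, and it sits exactly where the paper makes real use of the monotonicity hypothesis. Your verification (the choice $c_n\downarrow 0$ with $s_n\downarrow\si$, the estimate $\sup_{(0,\si]}|a_n-a|=c_n\to 0$, hence the uniform convergence $a_n\to a$ required by Theorem \ref{viscosityEqForV}) relies on $\lim_{s\to\si^+}a(s)=0$, which you simply assert (``a continuous, bounded $a$ with $a(\si)=0$''). This is not automatic: in the setting of Theorem \ref{viscosityEqForV f C2} one has $f'(s)\to 0$ and $f''(s)\to 0$ as $s\to\si^+$, so $a=f'/(sf'')$ is an indeterminate quotient, and nothing in $f\in C^2((0,+\infty))$ alone forces it to vanish in the limit. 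The paper proves \eqref{lim a(s) 0} precisely from the monotonicity of $a$: if $a(\si^+)=1/\al>0$, then $a\ge 1/\al$ on $(\si,\tilde\si)$, so $(\log f')'\le \al/s$, and integrating yields $f'(s)\ge f'(\tilde\si)(s/\tilde\si)^{\al}$, hence $f'(\si)>0$, a contradiction. So the ``main obstacle'' you identify (that monotonicity only serves to make the level set $\{a=c_n\}$ a single, monotonically moving point) misplaces the role of the hypothesis: without \eqref{lim a(s) 0}, your $c_n=a(s_n)$ need not tend to zero and the construction does not deliver the required convergences. A second, lesser gap: the claim that ``the same scheme delivers $f_n''\to f''$ uniformly on compacts of $(0,+\infty)$'' (needed for Theorem \ref{viscosityEqForV f C2}) is not bookkeeping; on $[\si-\de,\,s_n+\eta_n]$ one must estimate $f_n''\le f'(s_n+\eta_n)/\big((\si-\de)\,c_n\big)$ and couple $\eta_n$ to $s_n$ (say $f'(s_n+\eta_n)\le 2f'(s_n)$) to obtain the bound $2s_nf''(s_n)/(\si-\de)\to 0$, which uses $f''(\si^+)=0$ in the $C^2$ case; similarly the monotone decrease $f_n'\downarrow f'$ requires the mollified $a_n$ to satisfy $a\le a_{n+1}\le a_n$ pointwise, which constrains the gluing. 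These points can be repaired, but as written the proof of the key limit \eqref{lim a(s) 0} is missing.
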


\begin{proof}
We construct an explicit example. A convenient way to construct the sequence $\{f_n\}_{n\in\NN}$ is to modify $f'$ only in the interval $(0,\sigma+\ep)$, with $\ep>0$ small enough. We define
\begin{equation*}
f_\ep'(s)= \begin{cases}
f'(\si+\ep)\Big[ 2 \Big(\dfrac{s}{\si+\ep}\Big)^{p_\ep} - \Big(\dfrac{s}{\si+\ep}\Big)^{q_\ep} \Big], & 0 \leq s \leq \si+\ep, \\
f'(s), & s > \si+\ep,
\end{cases}
\end{equation*}
with
\begin{equation*}
p_\ep = \frac{(\si+\ep)f''(\si+\ep)}{f'(\si+\ep)} \left( 1 + \frac{1}{2} \omega_\ep \: \right),
\end{equation*}
and
\begin{equation*}
q_\ep = \frac{(\si+\ep)f''(\si+\ep)}{f'(\si+\ep)} \left( 1 + \omega_\ep \right),
\end{equation*}
where,
\begin{equation*}
\omega_\ep = \sqrt{2\left[1- \frac{f'''(\si+\ep)f'(\si+\ep)}{f''(\si+\ep)^2} - \frac{f'(\si+\ep)}{(\si+\ep)f''(\si+\ep)}\right]}.
\end{equation*}
Since $a(s)$ is nondecreasing in $[\sigma,\tilde{\sigma}]$, then the same holds for the $\log a(s)$. Thus,
\begin{equation*}
0 \leq \frac{d}{ds} \log a(s) = \frac{f''(s)}{f'(s)} - \frac{1}{s} - \frac{f'''(s)}{f''(s)}.
\end{equation*}
By multiplying by $f'(s)/f''(s)$ we get
\begin{equation*}
\frac{f'(s)}{sf''(s)} + \frac{f'(s)f'''(s)}{f''(s)^2} \leq 1,
\end{equation*}
for $s\in(\si,\tilde{\si})$, which implies that $\omega_\ep$ is well-defined.

Tedious but straightforward computations show that $f_\ep \in C^3(0,+\infty) \cap C^1([0,+\infty))$. Since we modified $f'$ only on a compact set, it is easy to show the uniform convergence of $f_n$ and $f_n'$ to $f$ and $f'$, respectively.

Notice that
\begin{equation*}
f_\ep''(s)= \begin{cases}
\dfrac{f'(\si+\ep)}{\si + \ep}\Big[ 2 p_\ep \Big(\dfrac{s}{\si+\ep}\Big)^{p_\ep-1} - q_\ep\Big(\dfrac{s}{\si+\ep}\Big)^{q_\ep-1} \Big], & 0 \leq s \leq \si+\ep, \\
f''(s), & s > \si+\ep;
\end{cases}
\end{equation*}
since $2p_\ep>q_\ep$ and $p_\ep<q_\ep$, then $f_\ep''>0$.

Notice that we have
\begin{equation} \label{lim a(s) 0}
\lim_{s \to \sigma^+} a(s) = 0.
\end{equation}
Indeed, assume by contradiction that there exists $\al>0$ such that $a(s) \to 1/\al$ as $s \to \sigma^+$. Since $a(s)$ is nondecreasing, then $a(s)\geq 1/\al$ for any $s\in[\si,\tilde{\si}]$, which implies that
\begin{equation*}
\frac{d}{ds} \log f'(s) \leq \frac{d}{ds} \log s^\al, \quad s\in(\si,\tilde{\si}).
\end{equation*}
By integrating both sides of the above inequality from $s$ to $\tilde{\si}$ and after simple manipulations, we obtain that
\begin{equation*}
f'(s) \geq f'(\tilde{\si}) \left(\frac{s}{\tilde{\si}}\right)^\al,
\end{equation*}
for any $s\in (\si,\tilde{\si})$. By taking the limit as $s\to \si^+$, we obtain $f'(\si)>0$, a contradiction. Thus, \eqref{lim a(s) 0} holds.

Since $p_\ep,q_\ep \geq (\si+\ep)f''(\si+\ep)/f'(\si+\ep)$, from \eqref{lim a(s) 0} we have
\begin{equation} \label{lim p e q}
\lim_{\ep\to 0^+} p_\ep = +\infty \ \textmd{ and } \ \lim_{\ep\to 0^+} q_\ep = +\infty.
\end{equation}
Assume that $\ep$ is small enough so that $p_\ep$ and $q_\ep$ are greater than $3$; then
\begin{equation*}
\frac{sf_\ep''(s) - f_\ep'(s)}{s^3} = f'(\si+\ep) \left[ \frac{2(p_\ep-1)}{(\si+\ep)^{p_\ep}} s^{p_\ep-3} - \frac{(q_\ep-1)}{(\si+\ep)^{q_\ep}} s^{q_\ep-3} \right],\quad 0 < s \leq \si+\ep,
\end{equation*}
and
\begin{equation*}
\frac{f_\ep'(s)}{s} = f'(\si+\ep) \left[ \frac{2p_\ep}{(\si+\ep)^{p_\ep}} s^{p_\ep-1} - \frac{q_\ep}{(\si+\ep)^{q_\ep}} s^{q_\ep-1} \right],\quad 0 < s \leq \si+\ep.
\end{equation*}
From \eqref{lim p e q}, we get \eqref{fn hp II} and \eqref{fn hp III}.

We notice that
\begin{equation*}
a_\ep(s)= \frac{f'(\si+\ep)}{(\si+\ep) f''(\si+\ep)} \cdot \frac{2\big(\frac{s}{\si+\ep}\big)^{p_\ep} - \big(\frac{s}{\si+\ep}\big)^{q_\ep}}{2\big(\frac{s}{\si+\ep}\big)^{p_\ep} - \big(\frac{s}{\si+\ep}\big)^{q_\ep} + \omega_\ep \big[{\big(\frac{s}{\si+\ep}\big)^{p_\ep} - \big(\frac{s}{\si+\ep}\big)^{q_\ep}}\big]},
\end{equation*}
for $0\leq s \leq \si+\ep$ and $a_\ep(s)=a(s)$ for $s > \si+\ep$. Thus,
\begin{equation*}
\sup_{s\in\RR} |a_\ep(s)-a(s)| \leq \frac{f'(\si+\ep)}{(\si+\ep) f''(\si+\ep)};
\end{equation*}
by \eqref{lim a(s) 0}, we obtain that $a_\ep$ converges uniformly to $a$.

Since $f_\ep''(s)=f''(s)$ for $s\geq \si+\ep$, it is clear that $b_n$ converges uniformly to $b$ in the compact sets contained in $(\si,+\infty)$.
\end{proof}

\begin{rmk}
We notice when $f$ is given by \eqref{f ikonal} or \eqref{f congestion traffic}, then $a$ satisfies the assumptions of Theorem \ref{thm exist fn}. Indeed, if $f$ is given by \eqref{f ikonal}, then we have
\begin{equation*}
a(s)= \begin{cases}
0, & 0\leq s \leq 1,\\
1- \frac{1}{s^2},& s>1,
\end{cases}
\end{equation*}
and then $a(s)$ is nondecreasing.

When $f$ is given by \eqref{f congestion traffic}, then
\begin{equation*}
a(s)= \begin{cases}
0, & 0\leq s \leq 1,\\
\frac{1}{q-1}\Big(1- \frac{1}{s}\Big),& s>1,
\end{cases}
\end{equation*}
which is a nondecreasing function.

\end{rmk}

\begin{example}
Let $f$ be given by \eqref{f ikonal}. Then $$f'(s) = \sqrt{(s^2-1)^+}$$ and $a$ and $b$ in \eqref{alpha} and \eqref{gamma} read as
\begin{equation*}
a(s)= \begin{cases}
0,& 0 \leq s \leq 1,\\
1- \dfrac{1}{s^2}, & s >1,
\end{cases}
\end{equation*}
and
\begin{equation*}
b(s)= s \sqrt{(s^2-1)^+}.
\end{equation*}
\end{example}

\vspace{1em}

We notice that, working as in the proof of Theorem \ref{viscosityEqForV}, we can prove that $u$ satisfies other equations in the viscosity sense which are of the same form as \eqref{eq min introd}. For instance, let $a^*>0$ be such that $a(s) < a^* $ for any $s \geq 0$; then it can be shown that $u$ satisfies
\begin{equation}  \label{eq viscosity final}
\begin{split}
\min \Big\{-\Big[1 + \frac{1-a^*}{a^*-a(|\nabla u|)} \Big] \Delta_\infty u -  \frac{|\nabla u|^2 a(|\nabla u|)}{a^* - a(|\nabla u|)}\Delta u - & \frac{b(|\nabla u|)}{a^* - a(|\nabla u|)}, \\ & \ \ \ |\nabla u(x)|-\si \Big\}=0,
\end{split}
\end{equation}
in the viscosity sense. If $f$ is given by \eqref{f ikonal}, we can choose $a^*=1$ and \eqref{eq viscosity final} reads as
\begin{equation*}
\min \big(-\Delta_\infty u - |\nabla u|^2 ( |\nabla u|^2 -1)^+ \Delta u - |\nabla u|^3 \sqrt{(|\nabla u|^2-1)^+}, |\nabla u(x)|- 1\big)=0.
\end{equation*}

\vspace{2em}
{\it {\bf Acknowledgments.} The author wishes to thank Rolando Magnanini and Simone Cecchini for the many helpful discussions. The author is grateful
for the careful and thoughtful comments of the referees which led to substantial improvements over a first version of this paper.
}

\end{document}